\newtheorem{theorem}{Theorem}[section]
\newtheorem{lemma}[theorem]{Lemma}
\theoremstyle{definition}
\newtheorem{definition}[theorem]{Definition}
\newtheorem{proposition}[theorem]{Proposition}
\newtheorem{corolary}[theorem]{Corollary}
\newtheorem{example}[theorem]{Example}
\theoremstyle{remark}
\newtheorem{remark}[theorem]{Remark}
\numberwithin{equation}{section}
\newcommand{\restr}[2]{{\left.\mkern-1mu #1\right|}_{#2}}
\begin{document}

\title{On the stability and shadowing of tree-shifts of finite type}

\author{Dawid Bucki}
\address{} 
\email{dawid.bucki@doctoral.uj.edu.pl} 
\thanks{}

\subjclass[2020]{Primary 37B10, 37B51 Secondary 37B25, 37B65} 

\date{}

\dedicatory{}

\begin{abstract}
We investigate relations between the pseudo-orbit-tracing property, topological stability and openness for tree-shifts.
We prove that a tree-shift is of finite type if and only if it has the pseudo-orbit-tracing property which implies that the tree-shift is topologically stable and all shift maps are open. 
We also present an example of a tree-shift for which all shift maps are open but which is not of finite type.
It also turns out, that if a topologically stable tree-shift does not have isolated points then it is of finite type. 
\end{abstract}

\maketitle

\section{Introduction}
Tree-shifts were introduced and studied in \cite{beal_1,beal_2,beal_3,beal_4,beal_5} as a step-in between one-dimensional shifts and multi-dimensional shifts defined over $\mathbb{Z}^d$. 
They are less complicated than multi-dimensional shifts but still enrich the dynamics of the classical one-dimensional case.
At the same time they preserve a lot of important properties of one-dimensional shifts, which often fail for the general $\mathbb{Z}^d$-case. 
The dynamics of tree-shifts is further studied in \cite{Ban:mixing} and \cite{Ban:mix_chaos} from which we derive our basics notions and terminology.\\
\indent Let $\Sigma$ be some finite non-empty set. 
A tree is a function assigning a label from some finite alphabet $\mathcal{A}$ to every node of the infinite $|\Sigma|$-ary tree which can be indexed by words over the alphabet $\Sigma$. 
Without loss of generality, one may consider only labeled binary trees, as all proofs easily generalise from the binary case to the case of $\Sigma$ with more elements. 
The space of all trees can be endowed with a metric compatible with the product topology on $\mathcal{A}^{\Sigma^*}$, where $\Sigma^*$ is the set of all words over $\Sigma$ endowed with the discrete topology.  
There are naturally arising shift maps denoted by $\sigma^i$, where the index $i$ is in $\Sigma$. 
The $i$-th map applied to a tree, gives us a sub-tree rooted at $i$-th node and relabels the nodes accordingly. 
A tree-shift is then defined as a closed subset of the family of all labeled trees invariant with respect to all shift actions $\sigma^i$, $(i\in\Sigma)$.\\
\indent It turns out that tree-shifts can be characterised by the forbidden sets of blocks, where a block is a labeled finite tree.
This is analogous to the way one-dimensional shifts are defined. 
For every tree-shift there is some set of blocks such that a labeled tree is in the tree-shift if and only if none of the forbidden blocks is a sub-block of the tree. 
If for a tree-shift $X$ this forbidden set defining $X$ can be finite then we say that $X$ is a tree-shift of finite type.\\
\indent The pseudo-orbit-tracing property (sometimes also called the shadowing property)  and topological stability were introduced in \cite{Bowen} and \cite{Walters1978} respectively.
They play a key role in the theory of stability of dynamical systems.
The pseudo-orbit-tracing property was also considered for actions of other groups and semi-groups in \cite{Kulczycki_Kwietniak} and in \cite{Osipov_Tikhomirov}.
In \cite{Walters1978} Walters proved that one-dimensional shifts are of finite type if and only if they have the pseudo-orbit-tracing property. In \cite{Oprocha2008ShadowingIM}, Oprocha showed that the same is true for multi-dimensional shifts over $\mathbb{Z}^d$. 
In section \ref{potp} we generalise Walter's proof obtaining similar result for tree-shifts. 
In section \ref{stab} we define topological stability for tree-shifts in an analogous way to \cite{Walters1978} and prove that if a tree-shift is of finite type, then it is topologically stable. 
We prove that topologically stable tree-shifts without isolated points have the pseudo-orbit-tracing property, therefore being of finite type.
This result was proved for zero-dimensional spaces equipped with a single transformation in \cite{stability} by Kawaguchi.\\
\indent In section \ref{open} we consider a generalisation of the notion of open shifts. 
This was introduced for one-dimensional shift spaces by Parry in \cite{parry}, where it is proved that the shift is open if and only if it is of finite type.
We prove that for tree-shifts, being of finite type implies that all shift maps are open and provide an example showing that the converse is false.
By doing so we provide another example showing that not all classical theorems concerning one-dimensional shifts of finite type can be generalised to tree-shifts (for another example of this type, see Theorem 3.3 in \cite{Ban:mixing}).
To the best of the author's knowledge there are no results characterising openness of shifts over semigroups.
Most of the multi-dimensional symbolic dynamics concerns actions of countable groups, where by definition every transformation in the action is a homeomorphism hence it is open.\\

This paper arose from author's Bachelor's Thesis written under the
supervision of Dominik Kwietniak.
In the final stages of this research, the author was supported by
National Science Centre (NCN), Poland grant no. 2022/47/O/ST1/03299.
For the purpose of Open Access, the author has applied a CC-BY public
copyright licence to any Author Accepted Manuscript (AAM) version
arising from this submission.
The author would like to express his gratitude to D. Kwietniak for his advices, introducing the author to the problem and for general guidance in author's first years of mathematical education.

The author also wants to thank anonymous reviewers for their careful reading of the manuscript and their comments that improved the quality of this article.

\section{Preliminaries}
By $\mathbb{N}$ we understand the set of non-negative integers and $n$ always denotes an integer.
Fix some alphabet (non-empty finite set) $\Sigma$.
For $n\geq 0$, $\Sigma^n$ stands for the set of words of length $n$ over the alphabet $\Sigma$ with $\Sigma^0=\{\epsilon\}$, where $\epsilon$ is the empty word. 
By $\Sigma^*=\bigcup_{n\geq 0} \Sigma^n$ we denote the set of all words over the alphabet $\Sigma$ and $\Sigma^{<n}=\bigcup_{k=0}^{n-1} \Sigma^k$ denotes the set of words whose length is smaller than $n\in\mathbb{N}$. 
For the word $w$ over $\Sigma$ and the set $B\subseteq\Sigma^*$ we put $wB = \{wu\ |\ u\in B\}$, where $wu$ is the concatenation of the words $w$ and $u$.
For alphabets $\Sigma$ and $\mathcal{A}$, a labeled tree on $\Sigma$ over $\mathcal{A}$ is a function $t\colon\Sigma^*\to\mathcal{A}$. 
One can imagine the set $\Sigma^*$ as an unlabeled $|\Sigma|$-ary infinite tree whose nodes correspond to words in $\Sigma^*$ and $\epsilon$ is the root.
Then labeled tree $t$ assigns to every node $w\in\Sigma^*$ a letter $t_w\in\mathcal{A}$. 
On the space $\mathcal{T}$ of all labeled trees on $\Sigma$ over $\mathcal{A}$, we define a metric $d$, by putting for labeled trees $s$ and $t$,
\begin{equation}\label{metric} d(s,t) = \begin{cases} 2^{-n-1},&\text{if }n=\min\{|w|\ |\ s_w\neq t_w\}<\infty,\\ 0,&\text{otherwise},\end{cases} \tag{$*$}
\end{equation}
where $|w|$ is the length of the word $w$. 
Observe that the metric space $(\mathcal{T},d)$ is compact.\\
From now on, we will refer to labeled trees as trees for simplicity.

\begin{remark}\label{balls-TS-remark}
    Note that for trees $s,t\in\mathcal{T}$ and $n\in\mathbb{N}$ we have that $d(s,t)<2^{-n}$ if and only if $\restr{s}{\Sigma^{<n}}=\restr{t}{\Sigma^{<n}}$.
\end{remark}

For two words $u,w\in\Sigma^*$, we say that $u$ is a prefix of $w$ if there is some word $v\in\Sigma^*$, such that $uv=w$. 
We say that a set $C\subseteq\Sigma^*$ is prefix-closed if for any $w\in C$ and $u$ being a prefix of $w$, we have $u\in C$. 
For example $\Sigma^{<n}$ is prefix-closed for every $n\in\mathbb{N}$.
Finite prefix-closed sets $C\subseteq\Sigma^*$ correspond to finite subtrees of the full $|\Sigma|$-ary tree rooted at the vertex $\epsilon$.
Given a finite prefix-closed set $C\subseteq \Sigma^*$, a function $p\colon C\to\mathcal{A}$ is called a pattern on $C$ over $\mathcal{A}$. 
Patterns correspond to labelled finite subtrees. 
A pattern $p\colon C\to\mathcal{A}$ is a sub-pattern of a tree $t\in\mathcal{T}$ if there is a word $w\in\Sigma^*$ such that
$$ \restr{t}{wC} = p.$$
If there is no such word $w$ for a pattern $p$, then we say that the tree $t$ omits $p$.
Patterns with domain $\Sigma^{<n}$, for $n>0$, are called blocks of height $n$.

To deal with dynamics on trees, we introduce the shift maps $\sigma^i\colon\mathcal{T}\to\mathcal{T}$, for $i\in\Sigma$, by defining $\sigma^i(t)_w = t_{iw}$ for $t\in\mathcal{T}$.
To shorten the notation we define $\sigma^w = \sigma^{w_{n-1}}\circ \ldots\circ\sigma^{w_0}$ for $w=w_0\ldots w_{n-1}\in\Sigma^n$. Then $\sigma^w(t)_u = t_{wu}$ for every $u,v\in\Sigma^*$ and $t\in\mathcal{T}$.
We also agree that $\sigma^\epsilon = \text{id}$.

\begin{definition}\label{TS}
    A set $X\subseteq\mathcal{T}$ is called a tree-shift (on $\Sigma$ over $\mathcal{A}$) if $X$ is closed and for every $i\in\Sigma$ it holds $\sigma^i(X)\subseteq X$.
\end{definition}

We can observe that if $\Sigma$ is a single element, then Definition \ref{TS} yields a classical one-sided shift over the alphabet $\mathcal{A}$.\\

For a block $b$ of height $n$ and a tree-shift $X$ we define the cylinder of $b$ as
$$ [b] = \left\{ t\in X\ |\ \restr{t}{\Sigma^{<n}}=b\right\}.$$
Note that $[b]$ depends on the choice of $X$ but we supress that from our notation.
We also denote the set of all blocks of height $n\geq 0$ over $\mathcal{A}$, appearing in trees from a tree-shift $X$, as $\mathcal{B}_n(X)$ and put $$\mathcal{B}(X)=\bigcup_{n>0}\mathcal{B}_n(X).$$ 
The set $\left\{[b]\ |\ b\in\mathcal{B}(X)\right\}$ is then a base for the topology on $X$ given by the metric defined by \eqref{metric} as if $b\colon \Sigma^{<n}\to\mathcal{A}$ is a block of height $n>0$ with $b\in\mathcal{B}_n(X)$, then there is $t\in X$ such that $\restr{t}{\Sigma^{<n}} = b$ and $[b]$ is the open ball of radius $2^{-n}$ centered at $t$.\\

Given a family of patterns $\mathcal{F}$, we define $\mathcal{T}_\mathcal{F}$ to be the set of all trees omitting all patterns from $\mathcal{F}$. 
Then $\mathcal{T}_\mathcal{F}$ is a closed subset of $\mathcal{T}$ and for every $i\in\Sigma$, we have
$\sigma^i(\mathcal{T}_\mathcal{F})\subseteq \mathcal{T}_\mathcal{F}$.
Therefore the family $\mathcal{T}_\mathcal{F}$ is a tree-shift. 
Of course for every non-empty tree-shift $X$ we have 
$ X = \mathcal{T}_{\mathcal{B}(\mathcal{T})\setminus\mathcal{B}(X)}$,
so every tree-shift can be characterised by some family of forbidden patterns.
Note that the family of forbidden patterns defining $X$ is not unique (compare with \cite{lind_marcus}).

\begin{definition}
    A tree-shift $X$ on $\Sigma$ over $\mathcal{A}$ is of finite type if $X=\mathcal{T}_\mathcal{F}$ for some finite set of patterns $\mathcal{F}$ on $\Sigma$ over $\mathcal{A}$.
\end{definition}

For the tree-shift of finite type we can always assume that its forbidden set of patterns $\mathcal{F}$ consists of blocks of some common height $n>0$, because it is always possible to extend forbidden patterns to forbidden patterns of arbitrary length. From now on we will be using this assumption.
For further reference we state a simple characterisation of tree-shifts of finite type.

\begin{lemma}\label{bl}
        A tree-shift $X$ is a tree-shift of finite type if and only if there is some $n>0$, such that for $t\in\mathcal{T}$, we have that $t\in X$ if and only if $t|_{w\Sigma^{<n}}\in\mathcal{B}_n(X)$ for every $w\in\Sigma^*$.
\end{lemma}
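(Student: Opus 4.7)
The plan is to derive the lemma as essentially a restatement of the definition, using the observation made just before the statement: for a tree-shift of finite type the forbidden family $\mathcal{F}$ can be chosen to consist entirely of blocks of a common height $n>0$.

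For the forward direction, assume $X=\mathcal{T}_\mathcal{F}$ where $\mathcal{F}$ is a finite set of blocks of common height $n$. Since every block in $\mathcal{F}$ is forbidden, no such block appears as a sub-pattern of any tree in $X$; equivalently, $\mathcal{F}\cap\mathcal{B}_n(X)=\emptyset$. If $t\in X$, then $\sigma^w(t)\in X$ for each $w\in\Sigma^*$ by shift-invariance, and since $t|_{w\Sigma^{<n}}$ is identified with $\sigma^w(t)|_{\Sigma^{<n}}$, this shows $t|_{w\Sigma^{<n}}\in\mathcal{B}_n(X)$. Conversely, if $t|_{w\Sigma^{<n}}\in\mathcal{B}_n(X)$ for every $w$, then no sub-pattern of $t$ of height $n$ can lie in $\mathcal{F}$, so $t\in\mathcal{T}_\mathcal{F}=X$.

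For the backward direction, let $n$ satisfy the stated equivalence and set $\mathcal{F}=\mathcal{B}_n(\mathcal{T})\setminus\mathcal{B}_n(X)$. Since $\mathcal{A}$ and $\Sigma^{<n}$ are finite, $\mathcal{F}$ is finite. The inclusion $X\subseteq\mathcal{T}_\mathcal{F}$ is immediate: for $t\in X$ the hypothesis gives $t|_{w\Sigma^{<n}}\in\mathcal{B}_n(X)$, so none of the sub-blocks of $t$ of height $n$ lies in $\mathcal{F}$. For the reverse inclusion, if $t\in\mathcal{T}_\mathcal{F}$ then every sub-block of $t$ of height $n$ avoids $\mathcal{F}$ and hence lies in $\mathcal{B}_n(X)$, so $t\in X$ by the assumed equivalence. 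Thus $X=\mathcal{T}_\mathcal{F}$ and $X$ is of finite type.

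The only minor point requiring care is the identification of the restriction $t|_{w\Sigma^{<n}}$, whose domain is the shifted set $w\Sigma^{<n}$, with the honest height-$n$ block $\sigma^w(t)|_{\Sigma^{<n}}$; once this notational convention is fixed, the argument is a direct unpacking of the definitions and I do not foresee any genuine obstacle.
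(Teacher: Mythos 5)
Your proof is correct; the paper states Lemma \ref{bl} without proof as a ``simple characterisation,'' and your argument is exactly the intended unpacking of the definitions (using the normalisation of $\mathcal{F}$ to blocks of common height $n$ for one direction, and $\mathcal{F}=\mathcal{B}_n(\mathcal{T})\setminus\mathcal{B}_n(X)$ for the other). The identification of $t|_{w\Sigma^{<n}}$ with $\sigma^w(t)|_{\Sigma^{<n}}$ that you flag is already the convention the paper uses implicitly in its definition of sub-patterns, so no genuine issue remains.
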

     
\section{Pseudo-orbit-tracing property}\label{potp} 
In this section we generalise Theorem 1 from \cite{Walters1978} to tree-shifts, adapting the approach from \cite{Walters1978} to our setting.
Let $X$ be a tree-shift on $\Sigma$ over $\mathcal{A}$.
\begin{definition}
        A $\delta$-pseudo-orbit in a tree-shift $X$ is a family $\left\{t^{(w)}\right\}_{w\in\Sigma^*}$ of trees from $X$ such that for every $w\in\Sigma^*$ and for any $i\in\Sigma$, we have 
        $$d\left(\sigma^i\left(t^{(w)}\right),t^{(wi)}\right)<\delta.$$
    \end{definition}
    
    If $\Sigma$ has a single element then the Definition \ref{potp} reduces to the usual definition of a $\delta$-pseudo-orbit.
    
    We can work with $\delta$ equal $2^{-n}$ for some $n\geq 0$, since we consider the metric $d$ on $X$ that can only attain such values. Therefore we will write ``$[n]$-pseudo-orbit'', instead of ``$2^{-n}$-pseudo-orbit''. 
    By Remark \ref{balls-TS-remark}, $\left\{t^{(w)}\right\}_{w\in\Sigma^*}$ is an $[n]$-pseudo-orbit if and only if for every $i\in\Sigma$ and $w\in\Sigma^*$ we have $$\restr{\sigma^i\left(t^{(w)}\right)}{\Sigma^{<n}} = \restr{t^{(w)}}{i\Sigma^{<n}}=\restr{t^{(wi)}}{\Sigma^{<n}}.$$
    
    \begin{lemma}\label{po}
        If $n>0$ and $\left\{t^{(w)}\right\}_{w\in\Sigma^*}$ is an $[n]$-pseudo-orbit in $X$, then for any $w,u,v\in\Sigma^*$, such that $uv\in\Sigma^{<n},$ we have $$t^{(w)}_{uv}=t^{(wu)}_v.$$ 
    \end{lemma}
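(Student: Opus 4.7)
The plan is to prove the lemma by induction on $|u|$, using the restated $[n]$-pseudo-orbit condition from the paragraph immediately preceding the lemma: namely, that $t^{(w)}_{iz}=t^{(wi)}_z$ for every $w\in\Sigma^*$, every $i\in\Sigma$, and every $z\in\Sigma^{<n}$.

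The base case $|u|=0$ is trivial: then $u=\epsilon$, so $t^{(w)}_{uv}=t^{(w)}_v=t^{(wu)}_v$. For the inductive step, assume the claim holds whenever the first argument has length less than $|u|$, and write $u=iu'$ with $i\in\Sigma$ and $|u'|=|u|-1$. Since $|u'v|=|uv|-1<n$, we have $u'v\in\Sigma^{<n}$, so the $[n]$-pseudo-orbit condition applied with $z=u'v$ gives
\[
t^{(w)}_{uv}=t^{(w)}_{iu'v}=t^{(wi)}_{u'v}.
\]
Now apply the inductive hypothesis to the word $u'$ (of length $|u|-1$), with base point $wi$ in place of $w$ and the same $v$; since $u'v\in\Sigma^{<n}$, this yields
\[
t^{(wi)}_{u'v}=t^{(wiu')}_{v}=t^{(wu)}_v,
\]
and chaining these two equalities completes the induction.

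I do not expect a genuine obstacle here: the argument is just a clean iteration of the defining property of an $[n]$-pseudo-orbit, peeling off one letter of $u$ at a time. The only point that needs care is checking that the length bound is preserved at each step, which is automatic because shortening $u$ by one letter only shortens $uv$, so the hypothesis $uv\in\Sigma^{<n}$ continues to hold for the truncated word throughout the induction.
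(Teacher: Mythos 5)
Your proof is correct and follows essentially the same route as the paper: induction on $|u|$, peeling off the leading letter of $u$ and applying the $[n]$-pseudo-orbit identity $t^{(w)}_{iz}=t^{(wi)}_z$ once per step (the paper merely writes the same chain of equalities in the reverse order). The length-bound check you make is exactly the point the paper also relies on.
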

    \begin{proof} 
        We will prove this lemma, by induction on the length of $u$.\\
        If $u=\epsilon$, then of course
        $$ t^{(w)}_{uv} = t^{(w)}_v = t^{(wu)}_v. $$
        Now, assume that $i\in\Sigma$ and $u\in\Sigma^*$ is such that $iuv\in\Sigma^{<n}$ and $$t^{(x)}_{uv}=t^{(xu)}_v$$
        for any $x\in\Sigma^*$. Then $$ t^{(wiu)}_v = t^{(wi)}_{uv}.$$
        But $\{t^{(w)}\}_{w\in\Sigma^*}$ is an $[n]$-pseudo-orbit and $uv\in\Sigma^{<n}$, so 
        $$ t^{(wiu)}_v = t^{(wi)}_{uv} = t^{(w)}_{iuv}.\qedhere $$
    \end{proof}
    
    \begin{definition}
        Let $T=\left\{t^{(w)}\right\}_{w\in\Sigma^*}$ be a family of trees in a tree-shift $X$, indexed by $\Sigma^*$.
        For $t\in X$ and $\varepsilon>0$, we say that $t$ $\varepsilon$-traces the family $T$, if for any $w\in\Sigma^*$, we have $$d\left(\sigma^w(t),t^{(w)}\right)<\varepsilon.$$
    \end{definition}
    
    Again, instead of writing ``$2^{-m}$-traces'' for some $m>0$, we will say ``$[m]$-traces''. 
    If a tree $t$ $[m]$-traces a family of trees $\left\{t^{(w)}\right\}_{w\in\Sigma^*}$, then by Remark \ref{balls-TS-remark} we have $$\restr{\sigma^w(t)}{\Sigma^{<m}}=\restr{t}{w\Sigma^{<m}}=\restr{t^{(w)}}{\Sigma^{<m}}.$$
    
    \begin{lemma}\label{trac}
        If trees $r,t\in X$ $[m]$-trace $\left\{t^{(w)}\right\}_{w\in\Sigma^*}\subseteq X$ for some $m>0$, then $r=t$.
    \end{lemma}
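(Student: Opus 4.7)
The plan is to extract from the tracing hypothesis a formula for the value $r_u$ (and similarly $t_u$) at an arbitrary node $u \in \Sigma^*$ purely in terms of the family $\{t^{(w)}\}_{w\in\Sigma^*}$, which will force $r = t$.

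First I would unpack the definition of $[m]$-tracing using Remark~\ref{balls-TS-remark}: the hypothesis that $r$ $[m]$-traces $\{t^{(w)}\}_{w\in\Sigma^*}$ is equivalent to
\[
    \restr{r}{w\Sigma^{<m}} = \restr{t^{(w)}}{\Sigma^{<m}} \quad \text{for every } w\in\Sigma^*,
\]
and likewise for $t$. Spelled out on individual nodes, this says $r_{wv} = t^{(w)}_v$ for every $w\in\Sigma^*$ and every $v\in\Sigma^{<m}$, and similarly $t_{wv} = t^{(w)}_v$.

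The key observation, and really the only idea needed, is that since $m>0$, the empty word $\epsilon$ lies in $\Sigma^{<m}$. So given any node $u\in\Sigma^*$, I can apply the displayed identity with the choice $w=u$ and $v=\epsilon$. This yields
\[
    r_u = r_{u\epsilon} = t^{(u)}_\epsilon = t_{u\epsilon} = t_u.
\]
Since this holds for every $u\in\Sigma^*$, the trees $r$ and $t$ agree everywhere and therefore $r = t$.

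There isn't really a hard step here; the proof is essentially a one-line unpacking of definitions. The only thing to be careful about is using the hypothesis $m>0$ so that $\epsilon\in\Sigma^{<m}$, which is what lets the case $v=\epsilon$ of the tracing identity reach every node of the tree. Lemma~\ref{po} is not needed for this statement — it is a property of pseudo-orbits themselves, whereas here we are comparing two shadowing trees.
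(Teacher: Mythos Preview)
Your proof is correct and is essentially identical to the paper's own argument: the paper also fixes an arbitrary $w\in\Sigma^*$ and reads off $r_w = \sigma^w(r)_\epsilon = t^{(w)}_\epsilon = \sigma^w(t)_\epsilon = t_w$ from the tracing hypothesis. Your remark that $m>0$ is needed so that $\epsilon\in\Sigma^{<m}$ is the right point to flag.
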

    \begin{proof} For every $w\in\Sigma^*$ we have
        $$ r_w = \sigma^w(r)_\epsilon = t^{(w)}_\epsilon = \sigma^w(t)_\epsilon = t_w.\qedhere $$
    \end{proof}
    
    \begin{definition}
        A tree-shift $X$ has the pseudo-orbit-tracing property, if for any $m> 0$, there is $n> 0$, such that every $[n]$-pseudo-orbit in $X$ is $[m]$-traced by some tree from $X$.
    \end{definition}
    
    It turns out that pseudo-orbit-tracing characterises the tree-shifts of finite type.
    
    \begin{theorem}\label{TSFT-POTP}
        A tree-shift $X$ is of finite type if and only if it has the pseudo-orbit-tracing property.
    \end{theorem}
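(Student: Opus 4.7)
The plan is to adapt Walters' argument from the one-dimensional setting to the tree-shift context. For the easier forward direction, suppose $X$ is of finite type with forbidden blocks of common height $n$, and fix $m > 0$. I would set $N = \max(m,n)$ and, for an arbitrary $[N]$-pseudo-orbit $\{t^{(w)}\}_{w\in\Sigma^*}$ in $X$, take as candidate tracing tree the tree $t$ defined by $t_w = t^{(w)}_\epsilon$. Lemma \ref{po} propagates local equalities along prefixes and yields $t_{uv} = t^{(u)}_v$ whenever $|v| < N$. Restricting to $|v| < n$ recovers each height-$n$ sub-block of $t$ from some $t^{(u)}\in X$, so Lemma \ref{bl} gives $t\in X$; restricting to $|v| < m$ is precisely the $[m]$-tracing condition.

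For the converse, the plan is to apply POTP with $m = 1$ to obtain $N \geq 1$ such that every $[N]$-pseudo-orbit is $[1]$-traced, and then to show $X$ is of finite type using the allowed blocks of height $N+1$. By Lemma \ref{bl}, it suffices to verify that whenever $t\in\mathcal{T}$ satisfies $\restr{t}{w\Sigma^{<N+1}} \in \mathcal{B}_{N+1}(X)$ for every $w\in\Sigma^*$, one has $t\in X$. For each $w$ I would pick a witness $t^{(w)}\in X$ with $\restr{t^{(w)}}{\Sigma^{<N+1}} = \restr{t}{w\Sigma^{<N+1}}$.

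The main (albeit mild) obstacle is to verify that this family $\{t^{(w)}\}_{w\in\Sigma^*}$ is actually an $[N]$-pseudo-orbit. The choice of height $N+1$ is made precisely so that applying $\sigma^i$ leaves a height-$N$ window in which both $\sigma^i(t^{(w)})$ and $t^{(wi)}$ must agree, since both coincide there with the portion of $t$ rooted at $wi$. Once this is checked, POTP supplies some $t'\in X$ that $[1]$-traces $\{t^{(w)}\}$, and the $[1]$-tracing condition combined with $t^{(w)}_\epsilon = t_w$ forces $t'_w = t_w$ for every $w\in\Sigma^*$. Hence $t' = t$ and $t\in X$, completing the argument.
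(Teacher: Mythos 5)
Your proposal is correct and follows essentially the same route as the paper: the same candidate tree $t_w=t^{(w)}_\epsilon$ with Lemma \ref{po} for the forward direction, and the same witness family built from allowed blocks of height one more than the pseudo-orbit parameter for the converse, with uniqueness of the traced tree forcing $t'=t$. The only cosmetic difference is an index shift ($N+1$ versus the paper's $n$ with $[n-1]$-pseudo-orbits).
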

    \begin{proof}
        We adapt the proof presented in \cite{Walters1978}.
        Assume that $X$ is of finite type and $X=\mathcal{T}_\mathcal{F}$ with $\mathcal{F}\subseteq\mathcal{B}_p(\mathcal{T})$ for some $p>0$. 
        Let $m> 0$. 
        We pick $n\geq\max\{p,m\}$. We will prove, that every $[n]$-pseudo-orbit in $X$ is $[m]$-traced by some point in $X$.\\
        Let $\left\{t^{(w)}\right\}_{w\in\Sigma^*}$ be an $[n]$-pseudo-orbit in $X$. We define $t\in\mathcal{T}$ by putting $t_w=t^{(w)}_\epsilon$ for $w\in\Sigma^*$.  
        By Lemma \ref{po} for $u\in\Sigma^{<m}$ and $w\in\Sigma^*$, we have
        $$ \sigma^w(t)_u = t_{wu} = t^{(wu)}_\epsilon = t^{(w)}_u. $$
        Therefore 
        $$\restr{\sigma^w(t)}{\Sigma^{<m}} = \restr{t^{(w)}}{\Sigma^{<m}}, $$ which means that $t$ indeed $[m]$-traces $\left\{t^{(w)}\right\}_{w\in\Sigma^*}$.\\
        
        \noindent It remains to prove that $t\in X$. 
        By the assumption that all forbidden words have length $p$, it suffices to check that
        $\restr{t}{w\Sigma^{<p}}\in\mathcal{B}_p(X)$, for any $w\in\Sigma^*$. 
        Fix $w\in\Sigma^*$.
        For $u\in\Sigma^{<p}$ we have $u\in\Sigma^{<n}$ because $n\geq p$ and
        $ t_{wu} = t^{(wu)}_\epsilon = t^{(w)}_u$, where the latter equality follows from Lemma \ref{po}. 
        It means that for any $w\in\Sigma^*$ we have
        $$\restr{t}{w\Sigma^{<p}} = \restr{t^{(w)}}{\Sigma^{<p}}$$ 
        and since we know that 
        $$\restr{t^{(w)}}{\Sigma^{<p}}\in\mathcal{B}_p(X),$$
        we conclude $\restr{t}{w\Sigma^{<p}}\in\mathcal{B}_p(X).$\\
        
        Now assume that $X$ has the pseudo-orbit-tracing property. Then there is some $n>1$ such that every $[n-1]$-pseudo-orbit is $[1]$-traced by some point from $X$. 
        We will prove that
        a tree $t$ is in $X$ if and only if for any $w\in\Sigma^*$, we have $\restr{t}{w\Sigma^{<n}}\in\mathcal{B}_n(X)$.
        Then Lemma \ref{bl} will yield that $X$ is of finite type. 
        The implication $(\Rightarrow)$ is by the definition, so let $t$ be such that for any $w\in\Sigma^*$, we have $\restr{t}{w\Sigma^{<n}}\in\mathcal{B}_n(X)$.
        This means that for any $w\in\Sigma^*$, there is some $t^{(w)}\in X$, such that $\restr{t^{(w)}}{\Sigma^{<n}} = \restr{t}{w\Sigma^{<n}}$. 
        Observe that the family $\left\{t^{(w)}\right\}_{w\in\Sigma^*}$ is an $[n-1]$-pseudo-orbit in $X$, because for $i\in\Sigma$ and $w\in\Sigma^*$, we have $$\restr{t^{(wi)}}{\Sigma^{<n-1}} = \restr{t}{wi\Sigma^{<n-1}} = \restr{t^{(w)}}{i\Sigma^{<n-1}} = \restr{\sigma^i\left(t^{(w)}\right)}{\Sigma^{<n-1}}.$$
        This $[n-1]$-pseudo-orbit is then $[1]$-traced by some $r\in X$, because of the pseudo-orbit-tracing property. 
        It means that for every $w\in\Sigma^*$ it holds $$r_w=\sigma^w(r)_\epsilon = t^{(w)}_\epsilon = t_w.$$ 
        So $r=t$ and thus $t\in X$.\qedhere
    \end{proof}

\section{Topological stability of tree-shifts}\label{stab}
    In the theory of one-dimensional shifts it is known, that if the shift has the pseudo-orbit-tracing property then it is topologically stable, which is proved in \cite{Walters1978}. 
    Also, if a one-dimensional shift is stable and has no isolated points then it has the pseudo-orbit-tracing property. 
    This can be concluded from \cite{stability}, where Kawaguchi proves this fact for general zero-dimensional spaces. 
    In this section we extend this result to tree-shifts.
    
    \begin{definition}\label{definition_stability}
        A tree-shift $X$ is topologically stable if for every $\varepsilon>0$ there exists $\delta>0$ such that if $\left\{\tau^i\right\}_{i\in\Sigma}$ is a family of continuous maps from $X$ to itself, satisfying
        $ d\left(\tau^i(t),\sigma^i(t)\right)<\delta$,
        for any $i\in\Sigma$ and $t\in X$,
        then there exists a continuous mapping $\varphi\colon X\to X$ such that for any $i\in\Sigma$ and $t\in X$ it holds
        $ d\left(\varphi(t),t\right)<\varepsilon$ and $\sigma^i\circ\varphi = \varphi\circ\tau^i$.
    \end{definition}
    
    When we consider the set $\{\tau^i\}_{i\in\Sigma}$ as above, then for $w\in\Sigma^n$, $\tau^w$ stands for $\tau^{w_{n-1}}\circ\ldots\circ\tau^{w_{0}}$.
    Again, to make our reasoning easier to follow, we will restate the above definition in an equivalent way.
    
    \begin{definition}\label{definition_nclose}
        If $X$ is a tree-shift, then maps $\zeta,\xi\colon X\to X$ are said to be $[n]$-close for $n> 0$, if for any $t\in X$ we have
        $$ \restr{\zeta(t)}{\Sigma^{<n}} = \restr{\xi(t)}{\Sigma^{<n}}.$$
        The families $\left\{\zeta^i\right\}_{i\in\Sigma}$, $\left\{\xi^i\right\}_{i\in\Sigma}$ of the maps from $X$ to $X$ are said to be $[n]$-close if $\zeta^i$ and $\xi^i$ are $[n]$-close for any $i\in\Sigma$.
    \end{definition}
    
    The following proposition follows immediately from Definitions \ref{definition_stability} and \ref{definition_nclose}.

    \begin{proposition}\label{stab-eq}
        A tree-shift $X$ is topologically stable if and only if for any $m>0$ there exists $n>0$ such that for any family $\left\{\tau^i\right\}_{i\in\Sigma}$ of continuous mappings from $X$ to $X$ that is $[n]$-close to the family $\left\{ \sigma^i\right\}_{i\in\Sigma}$ there is a continuous map $\varphi\colon X\to X$ that is $[m]$-close to identity and satisfies
        $ \varphi\circ\tau^i = \sigma^i\circ \varphi$
        for any $i\in\Sigma$.
    \end{proposition}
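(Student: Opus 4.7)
The proposition is purely a translation between the metric formulation of Definition \ref{definition_stability} and the combinatorial $[n]$-close formulation of Definition \ref{definition_nclose}, so the plan is to unfold both definitions and invoke Remark \ref{balls-TS-remark} to pass between balls of radius $2^{-k}$ and agreement on $\Sigma^{<k}$. There are no dynamical ideas beyond that; the only content is bookkeeping with powers of $2$.

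For the forward direction, I would fix $m>0$, apply topological stability with $\varepsilon = 2^{-m}$ to obtain some $\delta>0$, and then pick $n>0$ large enough so that $2^{-n}\leq \delta$. If $\{\tau^i\}_{i\in\Sigma}$ is $[n]$-close to $\{\sigma^i\}_{i\in\Sigma}$, then by Remark \ref{balls-TS-remark} we have $d(\tau^i(t),\sigma^i(t))<2^{-n}\leq\delta$ for every $i\in\Sigma$ and every $t\in X$. Hence Definition \ref{definition_stability} produces a continuous $\varphi\colon X\to X$ intertwining the actions with $d(\varphi(t),t)<2^{-m}$, and by Remark \ref{balls-TS-remark} this is precisely the statement that $\varphi$ is $[m]$-close to the identity.

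For the converse, I would fix $\varepsilon>0$ and choose $m>0$ with $2^{-m}\leq\varepsilon$. Apply the hypothesis to obtain $n>0$ such that any family $\{\tau^i\}_{i\in\Sigma}$ which is $[n]$-close to $\{\sigma^i\}_{i\in\Sigma}$ admits an intertwining continuous $\varphi\colon X\to X$ that is $[m]$-close to the identity. Then set $\delta=2^{-n}$. If continuous maps $\tau^i\colon X\to X$ satisfy $d(\tau^i(t),\sigma^i(t))<\delta$ for every $i\in\Sigma$ and every $t\in X$, then again by Remark \ref{balls-TS-remark} the families $\{\tau^i\}$ and $\{\sigma^i\}$ are $[n]$-close, so the hypothesis furnishes a continuous $\varphi$ with $\varphi\circ\tau^i=\sigma^i\circ\varphi$ and $d(\varphi(t),t)<2^{-m}\leq\varepsilon$.

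The only potential pitfall is the direction of the inequalities when matching $\delta$ to $2^{-n}$ (respectively $\varepsilon$ to $2^{-m}$); using $2^{-n}\leq\delta$ in one direction and $\delta=2^{-n}$ in the other handles this cleanly. No continuity or compactness argument is needed beyond what is already embedded in the two definitions.
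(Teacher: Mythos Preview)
Your proposal is correct and is exactly the immediate verification the paper has in mind: the paper does not write out a proof at all, stating only that the proposition follows immediately from Definitions~\ref{definition_stability} and~\ref{definition_nclose}. Your explicit bookkeeping with Remark~\ref{balls-TS-remark} and the choices $\varepsilon=2^{-m}$, $\delta=2^{-n}$ is precisely the intended unpacking.
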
   
    
    It is convenient to consider finite pseudo-orbits in a tree-shift.
    
    \begin{definition}
        Let $m,N>0$. A finite $[m]$-pseudo-orbit of order $N$ in a tree shift $X$ is a family $\left\{t^{(w)}\right\}_{w\in\Sigma^{<N}}$ of trees from $X$ satisfying
        $$ \restr{\sigma^i\left(t^{(w)}\right)}{\Sigma^{<m}} = \restr{t^{(wi)}}{\Sigma^{<m}}, $$
        for every $w\in\Sigma^{<N-1}$.
    \end{definition}
    
    For a finite pseudo-orbit we can define tracing and pseudo-orbit-tracing property in the same way as in the infinite case.
    
    \begin{definition}
        Let $n,N>0$.
        We say that a tree $t\in X$ $[n]$-traces a family $\{t^{(w)}\}_{w\in\Sigma^{<N}}$ of trees from $X$, if for any $w\in\Sigma^{<N}$ we have
        $$ \restr{\sigma^w(t)}{\Sigma^{<n}} = \restr{t^{(w)}}{\Sigma^{<n}}.$$
    \end{definition}
    
    \begin{definition}
        A tree-shift $X$ has the finite pseudo-orbit-tracing property if for any $m>0$ there is some $n>0$ such that for any $N>0$, any finite $[n]$-pseudo-orbit of order $N$ from $X$ is $[m]$-traced by a tree from $X$.
    \end{definition}
    
    Of course, the pseudo-orbit-tracing property implies the finite pseudo-orbit-tracing property. In compact spaces the converse is also true.
    
    \begin{proposition}
        If a tree-shift $X$ has the finite pseudo-orbit-tracing property then it has the pseudo-orbit-tracing property.
    \end{proposition}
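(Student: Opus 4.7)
The plan is to exploit compactness of the tree-shift $X$ to pass from tracing of arbitrarily long finite initial pieces of a pseudo-orbit to tracing of the whole infinite pseudo-orbit by a limit tree. Fix $m>0$ and apply the finite pseudo-orbit-tracing property to produce an $n>0$ such that \emph{every} finite $[n]$-pseudo-orbit, of any order $N$, is $[m]$-traced by some tree in $X$. I claim this very same $n$ works in the definition of the (infinite) pseudo-orbit-tracing property.

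So let $\{t^{(w)}\}_{w\in\Sigma^*}$ be an arbitrary $[n]$-pseudo-orbit in $X$. For each integer $N>0$, the subfamily $\{t^{(w)}\}_{w\in\Sigma^{<N}}$ is a finite $[n]$-pseudo-orbit of order $N$ in the sense of the given definition (the defining identities are the same, only fewer of them). By the finite pseudo-orbit-tracing property we may choose $r_N\in X$ that $[m]$-traces this finite piece, i.e.\ $\restr{\sigma^w(r_N)}{\Sigma^{<m}}=\restr{t^{(w)}}{\Sigma^{<m}}$ for every $w\in\Sigma^{<N}$. Since $X\subseteq\mathcal{T}$ is closed in a compact metric space, it is compact, so the sequence $(r_N)$ has some convergent subsequence $r_{N_k}\to r\in X$.

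It remains to check that $r$ $[m]$-traces the entire infinite pseudo-orbit. Fix $w\in\Sigma^*$. For all sufficiently large $k$ we have $|w|<N_k$, so $w\in\Sigma^{<N_k}$ and hence $\restr{\sigma^w(r_{N_k})}{\Sigma^{<m}}=\restr{t^{(w)}}{\Sigma^{<m}}$. The map $\sigma^w$ is continuous, so $\sigma^w(r_{N_k})\to\sigma^w(r)$; by Remark \ref{balls-TS-remark} the condition ``$\restr{\cdot}{\Sigma^{<m}}$ equals the prescribed block $\restr{t^{(w)}}{\Sigma^{<m}}$'' defines a clopen set in $X$, so it is preserved under this limit. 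Therefore $\restr{\sigma^w(r)}{\Sigma^{<m}}=\restr{t^{(w)}}{\Sigma^{<m}}$ for every $w\in\Sigma^*$, showing that $r$ $[m]$-traces $\{t^{(w)}\}_{w\in\Sigma^*}$.

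There is no real obstacle here; the only point that requires a small amount of care is the very last step, namely observing that ``agreement of a tree with a prescribed block on $\Sigma^{<m}$'' is a topologically closed (in fact clopen) condition and hence passes to the limit. This is exactly what Remark \ref{balls-TS-remark} provides, and together with compactness of $X$ and continuity of the finite compositions $\sigma^w$ it completes the argument.
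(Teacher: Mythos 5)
Your proof is correct and follows essentially the same route as the paper's: truncate the infinite pseudo-orbit, trace each finite piece, extract a convergent subsequence by compactness, and pass the tracing condition to the limit using the fact that agreement on $\Sigma^{<m}$ is a clopen condition. The only difference is cosmetic — you make the subsequence extraction and the limit argument slightly more explicit than the paper does.
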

    \begin{proof}
        Fix $m>0$ and pick $n>0$ using the finite pseudo-orbit-tracing property.
        Let $T=\left\{t^{(w)}\right\}_{w\in\Sigma^*}$ be an $[n]$-pseudo-orbit.
        For $N\in\mathbb{N}$, let $t^{N}$ be a tree that $[m]$-traces the finite pseudo-orbit $\left\{t^{(w)}\right\}_{w\in\Sigma^{<N}}$.
        Then, by compactness we can assume that the sequence $\left\{t^{N}\right\}_{N>0}$ converges to some $t\in X$ as $N\to\infty$.
        Let us prove that $t$ $[m]$-traces the $[n]$-pseudo-orbit $T$.
        Fix $w\in\Sigma^*$ and take $N>|w|$ big enough to have $$\restr{\sigma^w(t)}{\Sigma^{<m}} = \restr{\sigma^w\left(t^{N}\right)}{\Sigma^{<m}}.$$
        The tree $t^{N}$ $[m]$-traces the finite pseudo-orbit $\left\{t^{(v)}\right\}_{v\in\Sigma^{<N}}$ so $$\restr{\sigma^w\left(t^{N}\right)}{\Sigma^{<m}} = \restr{t^{(w)}}{\Sigma^{<m}}.$$
        Thus we obtain that 
        $$ \restr{\sigma^w(t)}{\Sigma^{<m}} = \restr{t^{(w)}}{\Sigma^{<m}}$$
        so the tree $t$ indeed $[m]$-traces the $[n]$-pseudo-orbit $T$.
    \end{proof}
    
    We will now prove that every tree-shift $X$ of finite type is topologically stable. As we already know, tree-shifts of finite type have the pseudo-orbit-tracing property. We can use this fact to obtain stability.
    
    \begin{theorem}\label{POTP-stab}
        If a tree-shift $X$ has the pseudo-orbit-tracing property, then it is topologically stable.
    \end{theorem}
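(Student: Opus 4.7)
The plan is to follow the classical Walters-style argument adapted to the tree setting: given a family $\{\tau^i\}_{i\in\Sigma}$ close to $\{\sigma^i\}_{i\in\Sigma}$, I would view the full $\tau$-orbit of each $t\in X$ as a pseudo-orbit in $X$ and define $\varphi(t)$ to be its unique tracer. Working with the equivalent reformulation given by Proposition \ref{stab-eq}, fix $m>0$ and use the pseudo-orbit-tracing property to choose $n>0$ such that every $[n]$-pseudo-orbit in $X$ is $[m]$-traced by some tree in $X$. Let $\{\tau^i\}_{i\in\Sigma}$ be an $[n]$-close family of continuous self-maps of $X$, and for each $t\in X$ put $T_t=\{\tau^w(t)\}_{w\in\Sigma^*}$ (with the convention $\tau^\epsilon=\mathrm{id}$).

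I would first check that $T_t$ is an $[n]$-pseudo-orbit in $X$: for any $w\in\Sigma^*$ and $i\in\Sigma$, the convention $\tau^{wi}=\tau^i\circ\tau^w$ together with the $[n]$-closeness of $\tau^i$ and $\sigma^i$ applied at the tree $\tau^w(t)$ gives
$$\restr{\sigma^i(\tau^w(t))}{\Sigma^{<n}} = \restr{\tau^i(\tau^w(t))}{\Sigma^{<n}} = \restr{\tau^{wi}(t)}{\Sigma^{<n}}.$$
By the pseudo-orbit-tracing property there exists a tree $\varphi(t)\in X$ that $[m]$-traces $T_t$, and Lemma \ref{trac} (together with $m\geq 1$) ensures this tracer is unique, so $\varphi\colon X\to X$ is a well-defined map. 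Reading the $[m]$-tracing condition at $w=\epsilon$ gives $\restr{\varphi(t)}{\Sigma^{<m}}=\restr{t}{\Sigma^{<m}}$, so $\varphi$ is $[m]$-close to the identity.

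For the intertwining $\sigma^i\circ\varphi=\varphi\circ\tau^i$ I would first extract the coordinate formula for $\varphi$: since $\epsilon\in\Sigma^{<m}$, the $[m]$-tracing condition evaluated at each $w\in\Sigma^*$ yields $\varphi(t)_w=\sigma^w(\varphi(t))_\epsilon=\tau^w(t)_\epsilon$. Then, using $\tau^{iw}=\tau^w\circ\tau^i$, one computes
$$(\sigma^i\circ\varphi)(t)_w = \varphi(t)_{iw} = \tau^{iw}(t)_\epsilon = \tau^w(\tau^i(t))_\epsilon = \varphi(\tau^i(t))_w,$$
which gives the required commutation. Continuity of $\varphi$ follows by checking each coordinate: the map $t\mapsto\varphi(t)_w=\tau^w(t)_\epsilon$ is continuous because $\tau^w$ is a finite composition of continuous maps and evaluation at $\epsilon$ is continuous, so $\varphi$ is continuous into the product topology on $X$.

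The main subtlety I anticipate is ensuring that the $[m]$-tracer is genuinely unique, so that $\varphi$ is well-defined as a single-valued function rather than a relation; this is where Lemma \ref{trac} carries the load. Once that is in place, the remaining work is essentially bookkeeping, in particular keeping straight the two distinct composition conventions $\tau^{wi}=\tau^i\circ\tau^w$ (needed for the pseudo-orbit check) and $\tau^{iw}=\tau^w\circ\tau^i$ (needed for the commutation), both of which are immediate from the definition $\tau^w=\tau^{w_{n-1}}\circ\cdots\circ\tau^{w_0}$.
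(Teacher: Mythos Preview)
Your proposal is correct and follows essentially the same Walters-style argument as the paper: build the pseudo-orbit $\{\tau^w(t)\}_{w\in\Sigma^*}$, define $\varphi(t)$ as its unique $[m]$-tracer via Lemma~\ref{trac}, and verify closeness to the identity, the intertwining relation, and continuity. Your coordinate-wise continuity argument (each $t\mapsto\tau^w(t)_\epsilon$ is continuous) is in fact a bit cleaner than the paper's sequential version, but the overall strategy is identical.
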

    \begin{proof}
        We will use an equivalent condition for topological stability given by Proposition \ref{stab-eq}.\\
        Fix $m>0$. By the pseudo-orbit-tracing property there is some $n>0$ such that every $[n]$-pseudo-orbit in $X$ is $[m]$-traced by some tree from $X$. 
        Let $\left\{\tau^i\right\}_{i\in\Sigma}$ be a set of mappings such that for any $i\in\Sigma$ and $t\in X$ we have
        $$ \restr{\tau^i(t)}{\Sigma^{<n}} = \restr{\sigma^i(t)}{\Sigma^{<n}}. $$
        Let us now construct a map $\varphi\colon X\to X$ as follows.
        Observe that for every tree $t\in X$ the set $\left\{\tau^w(t)\right\}_{w\in\Sigma^*}$ is an $[n]$-pseudo-orbit in $X$ as for every $i\in\Sigma$ and $w\in\Sigma^*$ we have
        $$ \restr{\sigma^i\left(\tau^w(t)\right)}{\Sigma^{<n}} = \restr{\tau^i\left(\tau^w(t)\right)}{\Sigma^{<n}} = \restr{\tau^{wi}(t)}{\Sigma^{<n}}.$$
        This pseudo-orbit is then $[m]$-traced by some tree $\varphi(t)\in X$. 
        By Lemma \ref{trac}, a tree tracing a pseudo-orbit is unique so the map $t\mapsto\varphi(t)$ is well-defined. 
        We now need to prove that the map $\varphi$ is continuous. 
        If $\left\{t^k\right\}_{k=0}^\infty$ is a sequence in $X$ converging to some $t\in X$ and $\varphi\left(t^k\right)$ is a tree $[m]$-tracing the $[n]$-pseudo-orbit $\left\{\tau^w\left(t^k\right)\right\}_{w\in\Sigma^*}$ then we want to prove that $\varphi\left(t^k\right)\to \varphi(t)$ as $k\to\infty$. 
        Observe that $\varphi\left(t^k\right)$ is defined by
        \begin{equation}\label{E} \varphi\left(t^k\right)_w = \sigma^w\left(\varphi\left(t^k\right)\right)_\epsilon = \tau^w\left(t^k\right)_\epsilon.\end{equation}
        It follows that for every $k$ big enough (depending on $n$), $\varphi\left(t^k\right)_w$ is equal to $\tau^w(t)_\epsilon$ because $\tau^w$ is a continuous mapping and $t^k$ converges. 
        We also know that for every $w\in\Sigma^*$ it holds
        \begin{equation}\label{EE} \varphi(t)_w = \sigma^w\left(\varphi(t)\right)_\epsilon = \tau^w(t)_\epsilon.\end{equation}
        Equations \eqref{E} and \eqref{EE} imply that for $w\in\Sigma^*$ we have
        $$ \varphi\left(t^k\right)_w = \tau^w\left(t^k\right)_\epsilon \longrightarrow \tau^w(t)_\epsilon = \varphi(t)_w\text{ as }k\to\infty.$$
        It follows that $\varphi(t^k)\to\varphi(t)$ as $k\to\infty$.
        The condition of $\varphi$ being close to the identity is also satisfied as for every $t\in X$ we have that $\varphi(t)$ $[m]$-traces the pseudo-orbit $\left\{\tau^w(t)\right\}_{w\in\Sigma^*}$ so
        $$ \restr{\varphi(t)}{\Sigma^{<m}} = \restr{\sigma^\epsilon(\varphi(t))}{\Sigma^{<m}} = \restr{\tau^\epsilon(t)}{\Sigma^{<m}} = \restr{t}{\Sigma^{<m}}.$$
        Finally, for any $w\in\Sigma^*$ and $i\in\Sigma$, using \eqref{E}, we also have
        $$ \varphi\left(\tau^i(t)\right)_w = \sigma^w\left(\varphi\left(\tau^i(t)\right)\right)_\epsilon = \tau^w\left(\tau^i(t)\right)_\epsilon = \tau^{iw}(t)_\epsilon = \varphi(t)_{iw} = \sigma^i\left(\varphi(t)\right)_w,$$
        so
        $$ \sigma^i\circ\varphi = \varphi\circ\tau^i.$$
        We have constructed the mapping $\varphi$ satisfying the conditions from Proposition \ref{stab-eq}, thus the tree-shift $X$ is topologically stable.
    \end{proof}
    
    \begin{corolary}\label{TSFT-stab}
        Every tree-shift of finite type is topologically stable.
    \end{corolary}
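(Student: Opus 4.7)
The statement is an immediate corollary of the two preceding results, so no new machinery is required and the ``proof'' is really just a two-step composition.

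The plan is to chain Theorem~\ref{TSFT-POTP} with Theorem~\ref{POTP-stab}. First I would invoke Theorem~\ref{TSFT-POTP}, which gives that any tree-shift of finite type has the pseudo-orbit-tracing property. Then I would feed this conclusion into the hypothesis of Theorem~\ref{POTP-stab}, which says precisely that any tree-shift with the pseudo-orbit-tracing property is topologically stable. Composing these two implications yields the corollary.

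There is essentially no obstacle here: the whole point of stating this as a corollary rather than a theorem is that all the real work has already been done in Theorem~\ref{TSFT-POTP} (the characterisation of finite type via POTP, which generalised Walters's argument to trees) and in Theorem~\ref{POTP-stab} (the construction of the shadowing map $\varphi$ out of the tracing tree of the pseudo-orbit $\{\tau^w(t)\}_{w\in\Sigma^*}$). The only thing one might want to remark is that no compactness or additional hypothesis is invoked in the composition; the implications hold for an arbitrary tree-shift $X$ on $\Sigma$ over $\mathcal{A}$, so the corollary is unconditional within the class of tree-shifts of finite type as defined above. Hence the written proof should be at most one or two sentences pointing to Theorems \ref{TSFT-POTP} and \ref{POTP-stab}.
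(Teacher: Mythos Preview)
Your proposal is correct and matches the paper's own proof essentially verbatim: the paper also simply invokes Theorem~\ref{TSFT-POTP} to obtain the pseudo-orbit-tracing property and then Theorem~\ref{POTP-stab} to conclude topological stability.
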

    \begin{proof}
        If $X$ is a tree-shift of finite type then by Theorem \ref{TSFT-POTP}, $X$ has the pseudo-orbit-tracing property. 
        We conclude from Theorem \ref{POTP-stab} that it is topologically stable.
    \end{proof}

    \begin{theorem}
        If $X$ is a topologically stable tree-shift with no isolated points then it has the finite pseudo-orbit-tracing property.
    \end{theorem}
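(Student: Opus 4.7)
The plan is to adapt the approach Kawaguchi used in \cite{stability} to our multi-parameter setting: given a finite pseudo-orbit in $X$, I will construct a family of continuous perturbations $\{\tau^i\}_{i \in \Sigma}$ of $\{\sigma^i\}_{i \in \Sigma}$ whose iterates along any word $w$ send a carefully chosen seed directly to the corresponding pseudo-orbit point, and then extract the tracing tree from the conjugacy produced by topological stability. Fix $m > 0$ and apply Proposition~\ref{stab-eq} to obtain $n_0 > 0$ such that every continuous family $[n_0]$-close to $\{\sigma^i\}_{i \in \Sigma}$ admits a continuous $\varphi\colon X \to X$ which is $[m]$-close to the identity and satisfies $\sigma^i \circ \varphi = \varphi \circ \tau^i$ for every $i \in \Sigma$. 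Set $n = \max\{m, n_0 + 1\}$ and let $\{t^{(w)}\}_{w \in \Sigma^{<N}}$ be an arbitrary finite $[n]$-pseudo-orbit of order $N$.

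The heart of the argument is the construction of the $\tau^i$. Since $X$ is a compact, metrisable and zero-dimensional space with no isolated points, every non-empty clopen subset of $X$ is infinite. I will use this to pick, greedily, pairwise distinct trees $s^{(w)} \in [\restr{t^{(w)}}{\Sigma^{<n}}]$ for every $w \in \Sigma^{<N}$; in particular $\restr{s^{(w)}}{\Sigma^{<n}} = \restr{t^{(w)}}{\Sigma^{<n}}$. Next, choose $k \geq n_0 + 1$ large enough that the finitely many $s^{(w)}$ are pairwise separated at depth $k$, and put $V_w = [\restr{s^{(w)}}{\Sigma^{<k}}]$; these are pairwise disjoint clopen neighborhoods of the $s^{(w)}$. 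Define
$$\tau^i(t) = \begin{cases} s^{(wi)}, & \text{if } t \in V_w \text{ for some } w \in \Sigma^{<N-1},\\ \sigma^i(t), & \text{otherwise.}\end{cases}$$
Each $\tau^i$ is continuous because it is constant on every clopen $V_w$ and equal to $\sigma^i$ on the clopen complement. The verification that $\tau^i$ is $[n_0]$-close to $\sigma^i$ reduces, for $t \in V_w$ with $w \in \Sigma^{<N-1}$, to the identity $\restr{s^{(wi)}}{\Sigma^{<n_0}} = \restr{t}{i\Sigma^{<n_0}}$; this follows from $\restr{t}{\Sigma^{<k}} = \restr{s^{(w)}}{\Sigma^{<k}}$ (via $k \geq n_0 + 1$), the matching $\restr{s^{(v)}}{\Sigma^{<n}} = \restr{t^{(v)}}{\Sigma^{<n}}$, and the pseudo-orbit condition for $\{t^{(w)}\}$.

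Applying Proposition~\ref{stab-eq} to $\{\tau^i\}$ then yields the conjugacy $\varphi$ with the stated properties. I let $t = \varphi(s^{(\epsilon)})$ and claim that $t$ is the desired $[m]$-tracing tree. A short induction on $|w|$ shows $\tau^w(s^{(\epsilon)}) = s^{(w)}$ for every $w \in \Sigma^{<N}$: at each step the relevant iterate sits in the corresponding $V_v$, so the first branch of the definition of $\tau^i$ fires. The intertwining $\sigma^w \circ \varphi = \varphi \circ \tau^w$, obtained by iterating $\sigma^i \circ \varphi = \varphi \circ \tau^i$, then gives $\sigma^w(t) = \varphi(s^{(w)})$, which agrees with $s^{(w)}$ on $\Sigma^{<m}$ by the $[m]$-closeness of $\varphi$ to identity, and hence with $t^{(w)}$ on $\Sigma^{<m}$ because $n \geq m$. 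Thus $t$ $[m]$-traces the finite pseudo-orbit and $X$ has the finite pseudo-orbit-tracing property.

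The principal obstacle is the construction of the $\tau^i$: the original points $t^{(w)}$ of the pseudo-orbit need not be distinct, so they cannot in general be separated by disjoint clopen sets, and an attempt to set $\tau^i(t^{(w)}) = t^{(wi)}$ directly may fail to define $\tau^i$ consistently at a given tree. Passing to distinct representatives $s^{(w)}$ chosen within the same cylinders at depth $n$ is precisely where the no-isolated-points hypothesis enters the proof, and it is also what lets the iteration $\tau^w(s^{(\epsilon)}) = s^{(w)}$ proceed unambiguously.
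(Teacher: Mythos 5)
Your proposal is correct and follows essentially the same route as the paper: perturb the pseudo-orbit to pairwise distinct representatives using the absence of isolated points, separate them by deep cylinders, define $\tau^i$ to jump along the perturbed orbit and coincide with $\sigma^i$ elsewhere, and extract the tracing tree from the semiconjugacy $\varphi$ given by topological stability. The only (harmless, arguably cleaner) difference is that you take $\varphi\bigl(s^{(\epsilon)}\bigr)$ rather than $\varphi\bigl(t^{(\epsilon)}\bigr)$ as the tracing point.
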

    \begin{proof}
        Let $m>0$ and take $n>m$ from the definition of topological stability and let $N>1$ (for $N\leq 1$ there is nothing to prove). We will prove that every finite $[n]$-pseudo-orbit of order $N$ in $X$ is $[m]$-traced by some point from $X$. 
        Fix $T=\left\{t^{(w)}\right\}_{w\in\Sigma^{<N}}$ such that for every $w\in\Sigma^{<N-1}$ and $i\in\Sigma$ we have
        $$ \restr{\sigma^i\left(t^{(w)}\right)}{\Sigma^{<n}} = \restr{t^{(wi)}}{\Sigma^{<n}}.$$
        The first step is to turn the map $w\mapsto t^{(w)}$ into an injection on $\Sigma^{<N}$. 
        Assume that there are some $u,v\in\Sigma^{<N}$ such that $u\neq v$ and
        $ t^{(u)} = t^{(v)}$ appear in $T$.
        We know that $t^{(v)}$ is not an isolated point in $X$ (as we assumed that $X$ has no isolated points), so there are infinitely many $t\in X$ such that 
        $$ t|_{\Sigma^{<n+1}} = t^{(v)}|_{\Sigma^{<n+1}}.$$
        We then choose $t$ that does not appear in $\left\{t^{(w)}\right\}_{w\in\Sigma^{<N}}$ and replace $t^{(v)}$ by $t$. 
        After finitely many steps we obtain the family $S=\left\{s^{(w)}\right\}_{w\in\Sigma^{<N}}$ such that 
        $w\mapsto s^{(w)}$ is an injection over $\Sigma^{<N}$ and
        for each $w\in\Sigma^{<N}$ we have
        $$ \restr{s^{(w)}}{\Sigma^{<n+1}} = \restr{t^{(w)}}{\Sigma^{<n+1}}.$$ 
        Observe that $S$ is still an $[n]$-pseudo-orbit of order $N$ as for $w\in\Sigma^{<N-1}$ we have
        $$ \restr{\sigma^i\left(s^{(w)}\right)}{\Sigma^{<n}} = \restr{s^{(w)}}{i\Sigma^{<n}} = \restr{t^{(w)}}{i\Sigma^{<n}} = \restr{\sigma^i(t^{(w)})}{\Sigma^{<n}} = \restr{t^{(wi)}}{\Sigma^{<n}} = \restr{s^{(wi)}}{\Sigma^{<n}}.$$ 
        
        The second step is to use $S$ to construct a family of maps $\left\{\tau^i\right\}_{i\in\Sigma}$. 
        Our family $S$ is finite and two trees in $S$ corresponding to different words are different, so we can find an integer $M>\max\{m,n\}$ such that for any $u,v\in\Sigma^{<N}$ with $u\neq v$ we have
        $$ \restr{s^{(u)}}{\Sigma^{<M}} \neq \restr{s^{(v)}}{\Sigma^{<M}}.$$
        For $i\in\Sigma$ and $t\in X$, we define $\tau^i(t)$ as follows.
        If there is $w\in\Sigma^{<N-1}$ such that
        $$ \restr{t}{\Sigma^{<M}} = \restr{s^{(w)}}{\Sigma^{<M}},$$
        then we put
        $ \tau^i(t) = s^{(wi)}.$
        Observe that if such a word exists then it is unique, because of the way we constructed $S$.
        Otherwise we put
        $ \tau^i(t) = \sigma^i(t).$
        The maps $\left\{\tau^i\right\}_{i\in\Sigma}$ are well defined and it is trivial to check that they are continuous. 
        Let us prove that $\tau^i$ is $[n]$-close to $\sigma^i$ for every $i\in\Sigma$. 
        If $t\in X$ and for every $w\in\Sigma^{<N-1}$ we have
        $$ \restr{t}{\Sigma^{<M}} \neq \restr{s^{(w)}}{\Sigma^{<M}},$$
        then of course
        $$ \restr{\tau^i(t)}{\Sigma^{<n}} = \restr{\sigma^i(t)}{\Sigma^{<n}}.$$
        If for some $w\in\Sigma^{<N-1}$ it holds
        $$ \restr{t}{\Sigma^{<M}} = \restr{s^{(w)}}{\Sigma^{<M}},$$
        then
        $$ \restr{\tau^i(t)}{\Sigma^{<n}} = \restr{s^{(wi)}}{\Sigma^{<n}} = \restr{\sigma^i(s^{(w)})}{\Sigma^{<n}} = \restr{s^{(w)}}{i\Sigma^{<n}} = \restr{t}{i\Sigma^{<n}} = \restr{\sigma^i(t)}{\Sigma^{<n}}.$$
        Hence, for each $i\in\Sigma$ the map $\tau^i$ is $[n]$-close to $\sigma^i$ and by topological stability we get, that there is some continuous $\varphi\colon X\to X$ such that 
        $$ \sigma^i\circ\varphi = \varphi\circ\tau^i $$
        for any $i\in\Sigma$, and $\varphi$ is $[m]$-close to the identity on $X$. We will prove, that $\varphi\left(t^{(\epsilon)}\right)$ $[m]$-traces the $N$-finite $[n]$-pseudo-orbit $\left\{t^{(w)}\right\}_{w\in\Sigma^{<n}}$. 
        Indeed, for $w\in\Sigma^{<N}$ we have
        $$ \restr{\sigma^w\left(\varphi\left(t^{(\epsilon)}\right)\right)}{\Sigma^{<m}} = \restr{\varphi\left(\tau^w\left(t^{(\epsilon)}\right)\right)}{\Sigma^{<m}} = \restr{\tau^w(t^{(\epsilon)})}{\Sigma^{<m}} =  \restr{s^{(w)}}{\Sigma^{<m}} = \restr{t^{(w)}}{\Sigma^{<m}}.$$
        The penultimate equality is by the definition of $\tau$ mappings and induction as 
        $$ \tau^w\left(t^{(\epsilon)}\right) = \tau^w\left(s^{(\epsilon)}\right) = \tau^{w_0w_1\ldots}\left(s^{(\epsilon)}\right) = \tau^{w_1w_2\ldots}\left(s^{(w_0)}\right) = \ldots = s^{(w)}.$$ 
        Therefore, the finite $[n]$-pseudo-orbit $\left\{ t^{(w)}\right\}_{w\in\Sigma^{<N}}$ is $[m]$-traced by some tree from $X$, so $X$ has the finite pseudo-orbit-tracing property.
    \end{proof}
    
    We note two immediate corollaries.
    
    \begin{corolary}
        If a tree-shift is topologically stable and has no isolated points then it is of finite type.
    \end{corolary}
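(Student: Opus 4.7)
The plan is to simply chain together the three results immediately preceding this corollary, so that the proof is essentially a one-line deduction and the bulk of the work has already been done in the earlier theorems. Let $X$ be a topologically stable tree-shift with no isolated points.

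First, I would invoke the immediately preceding theorem, which states that such an $X$ has the finite pseudo-orbit-tracing property. This is the heavy lifting, but it is already in place. Next, I would apply the proposition establishing that, on a compact tree-shift, the finite pseudo-orbit-tracing property upgrades to the full pseudo-orbit-tracing property (this used a diagonal/compactness argument to extract a limit of finite tracers). This gives that $X$ has the pseudo-orbit-tracing property.

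Finally, I would apply Theorem \ref{TSFT-POTP}, which asserts the equivalence between being of finite type and having the pseudo-orbit-tracing property. The $(\Leftarrow)$ direction of that theorem gives us immediately that $X$ is of finite type, completing the proof.

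There is no real obstacle here; the entire content of the corollary is already packaged in the preceding theorem and proposition. The only thing to double-check is that the proposition about finite POTP implying POTP is indeed stated without any extra hypothesis (such as absence of isolated points), so that the chain of implications goes through cleanly under the assumptions of the corollary. Since that proposition relies only on compactness of $X$, which holds for every tree-shift, this is automatic and no additional argument is needed.
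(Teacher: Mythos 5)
Your proposal is correct and is exactly the intended argument: the paper states this as an immediate corollary of the preceding theorem (topological stability plus no isolated points implies the finite pseudo-orbit-tracing property), the proposition upgrading finite POTP to POTP via compactness, and Theorem \ref{TSFT-POTP}. Your check that the compactness proposition carries no extra hypotheses is a sensible precaution, and the chain goes through as you describe.
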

    
    \begin{corolary}
        A tree-shift with no isolated points is topologically stable if and only if it is of finite type.
    \end{corolary}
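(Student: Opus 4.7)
The plan is to observe that this final corollary is a direct combination of the two preceding results, with no new work required. Specifically, the forward direction is exactly the immediately preceding corollary (which says that a topologically stable tree-shift with no isolated points is of finite type), and the reverse direction is Corollary \ref{TSFT-stab} (which says every tree-shift of finite type is topologically stable, with no hypothesis on isolated points).

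I would write it as follows. Fix a tree-shift $X$ with no isolated points. For the implication $(\Rightarrow)$, assume $X$ is topologically stable. Since $X$ has no isolated points, the preceding corollary applies and yields that $X$ is of finite type. For the implication $(\Leftarrow)$, assume $X$ is of finite type. Then by Corollary \ref{TSFT-stab} (which in turn uses Theorem \ref{TSFT-POTP} and Theorem \ref{POTP-stab}, namely that finite type gives POTP which gives topological stability), $X$ is topologically stable; note that this direction does not use the assumption on isolated points.

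There is no real obstacle here — the statement is genuinely a bookkeeping consequence of results already established in the section. The only thing worth flagging is that the hypothesis ``no isolated points'' is only needed for one direction; the other direction holds for arbitrary tree-shifts of finite type. Thus the corollary can be read as saying that among tree-shifts without isolated points, topological stability and finite type coincide, whereas in general one only has the one-sided implication ``finite type $\Rightarrow$ topologically stable''.
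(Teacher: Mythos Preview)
Your proposal is correct and matches the paper's treatment exactly: the paper presents this corollary as immediate (``We note two immediate corollaries''), relying precisely on the preceding corollary for $(\Rightarrow)$ and on Corollary~\ref{TSFT-stab} for $(\Leftarrow)$, just as you describe. Your remark that the no-isolated-points hypothesis is used only in one direction is also accurate and in line with the paper's concluding summary.
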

    
\section{Open tree-shifts}\label{open}
    In \cite{parry} Parry proves that shift spaces such that the mapping $\restr{\sigma}{X}$ is open, are exactly shifts of finite type. It appears that this result cannot be fully generalised to the tree-shift case. In this section, we present an example of a tree-shift for which all shift maps are open, but which is not of finite type. However we prove that the converse implication holds: if a tree-shift $X$ is of finite type then for every $i\in\Sigma$ the map $\restr{\sigma^i}{X}$ is open.
    
    \begin{theorem}\label{TSFT-open}
        If a tree-shift $X$ is of finite type, then for every $i\in \Sigma$, the shift map $\restr{\sigma^i}{X}$ is open.
    \end{theorem}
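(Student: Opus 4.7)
The plan is to work directly with cylinders, which form a base of the topology on $X$, and to exhibit the image $\sigma^i([b])$ of each basic open set as another cylinder. As usual I take the forbidden family defining $X$ to consist of blocks of some common height $p$, so that Lemma~\ref{bl} provides the key characterisation that $t\in X$ if and only if $\restr{t}{w\Sigma^{<p}}\in\mathcal{B}_p(X)$ for every $w\in\Sigma^*$. A first harmless reduction is to cylinders $[b]$ with $b\in\mathcal{B}_n(X)$ and $n\geq p+1$, since any cylinder of smaller height is the union of the cylinders of its $X$-admissible extensions to any greater prescribed height.

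For such a fixed $b$ the central claim is
\[ \sigma^i([b]) = [b'], \qquad b' := \restr{b}{i\Sigma^{<n-1}}, \]
where $b'\in\mathcal{B}_{n-1}(X)$ because any witness $t^*\in[b]$ (which exists since $b\in\mathcal{B}_n(X)$) yields $\sigma^i(t^*)\in X$ with $\restr{\sigma^i(t^*)}{\Sigma^{<n-1}}=b'$. The inclusion $\sigma^i([b])\subseteq[b']$ is immediate. For the reverse inclusion I would construct a preimage by \emph{grafting}: given $s\in[b']$ and the fixed $t^*\in[b]$, define
\[ t_w = \begin{cases} s_{w'}, & \text{if } w = iw', \\ t^*_w, & \text{otherwise.} \end{cases} \]
A direct computation then gives $\sigma^i(t)=s$, and the identity $\restr{s}{\Sigma^{<n-1}}=\restr{b}{i\Sigma^{<n-1}}$ (encoded in $s\in[b']$) forces $\restr{t}{\Sigma^{<n}}=b$; thus $t\in[b]$ as soon as $t\in X$ is confirmed.

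Confirming $t\in X$ is the main obstacle, and I would handle it through Lemma~\ref{bl} by a case analysis on $u\in\Sigma^*$ of the pattern $\restr{t}{u\Sigma^{<p}}$. When $u$ begins with $i$, say $u=iu'$, this pattern coincides with $\restr{s}{u'\Sigma^{<p}}\in\mathcal{B}_p(X)$ because $s\in X$. When $u$ begins with some $j\neq i$, it coincides with $\restr{t^*}{u\Sigma^{<p}}\in\mathcal{B}_p(X)$ because $t^*\in X$. The subtle case is $u=\epsilon$, where \emph{a priori} $\restr{t}{\Sigma^{<p}}$ could be a new ``mixed'' pattern combining values of $s$ and $t^*$; here the reduction $n\geq p+1$ becomes essential, because for every $iw'\in\Sigma^{<p}$ we have $|w'|<p-1\leq n-1$, whence $s_{w'}=b_{iw'}=t^*_{iw'}$ and therefore $\restr{t}{\Sigma^{<p}}=\restr{t^*}{\Sigma^{<p}}\in\mathcal{B}_p(X)$. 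This coupling of a single uniform forbidden height $p$ with the freedom to pad $n$ above it is precisely where the finite-type hypothesis enters, completing the proof that $\sigma^i([b])=[b']$ is open.
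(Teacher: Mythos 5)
Your proof is correct and follows essentially the same route as the paper: both graft $s$ onto a fixed preimage tree from $[b]$ and verify membership in $X$ via the finite-type block characterisation with the same three-case analysis (words starting with $i$, words starting with $j\neq i$, and the root pattern, where the padding of the cylinder height above the forbidden-block height is used). The only cosmetic difference is that you identify $\sigma^i([b])$ as exactly the cylinder $[b']$, whereas the paper shows it contains a cylinder around each of its points; the underlying construction and verification are the same.
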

    \begin{proof}
        We generalise the proof presented in \cite{parry}.
        Let $X$ be a tree-shift of finite type. We can assume that its forbidden set $\mathcal{F}$ is contained in $\mathcal{B}_n(\mathcal{T})$ for some $n>0$. It suffices to show that for any $b\in\mathcal{B}_m(X)$, where $m>n$, and $i\in\Sigma$, the set $\sigma^i\left([b]\right)$ is open, because sets of this form constitute a base for the topology of $X$. 
        Let $t\in\sigma^i\left([b]\right)$ and let $r\in [b]$ be such that $\sigma^i(r)=t$. We will prove that 
        $$\left[\restr{t}{\Sigma^{<m}}\right]\subseteq\sigma^i\left([b]\right).$$
        Let $s\in\left[\restr{t}{\Sigma^{<m}}\right]$ and define $q\colon\Sigma^*\to\mathcal{A}$ as follows. For $w\in\Sigma^*$, put $q_{iw}=s_w$ and for words $u\in\Sigma^*$, that do not start with $i$, put $q_w=r_w.$
        It is then obvious that $\sigma^i(q)=s$.
        The tree $q$ is an element of cylinder $[b]$ because 
        $$ q_\epsilon = r_\epsilon = b_\epsilon,$$
        $$ \restr{q}{i\Sigma^{<m-1}} = \restr{s}{\Sigma^{<m-1}} = \restr{t}{\Sigma^{<m-1}} = \restr{\sigma^i(r)}{\Sigma^{<m-1}} = \restr{r}{i\Sigma^{<m-1}} = \restr{b}{i\Sigma^{<m-1}},$$
        and for $j\in\Sigma\setminus\{i\}$ there is
        $$ \restr{q}{j\Sigma^{<m-1}} = \restr{r}{j\Sigma^{<m-1}} = \restr{b}{j\Sigma^{<m-1}}.$$
        Therefore
        $$ \restr{q}{\Sigma^{<m}} = \restr{b}{\Sigma^{<m}}.$$
        Also, for any $w\in\Sigma^*$, we have $\restr{q}{w\Sigma^{<n}}\in\mathcal{B}_n(X)$.
        If $w$ starts with $i$ then $w=iu$ for some word $u$ and
        $$ \restr{q}{iu\Sigma^{<n}} = \restr{s}{u\Sigma^{<n}}\in\mathcal{B}_n(X).$$
        If $w$ starts with some other letter then
        $$ \restr{q}{w\Sigma^{<n}} = \restr{r}{w\Sigma^{<n}}\in\mathcal{B}_n(X).$$
        Finally, if $w=\epsilon$ then 
        $$ \restr{q}{\Sigma^{<n}} = \restr{b}{\Sigma^{<n}}\in\mathcal{B}_n(X).$$
        Thus $q\in X$ and so we get that $s\in\sigma^i([b])$.\qedhere
    \end{proof}
    
    Now, we will construct a tree-shift $X$ that is not of finite type, but for any open set $U\subseteq X$ and $i\in\Sigma$, the set $\sigma^i(U)$ is open in $X$.\\
    
    \begin{example}
    We define the set $\mathcal{F}$ as follows. For $n>0$ and $b\colon\{0,1\}^{<n}\to\{0,1\}$, let $b\in\mathcal{F}$ if and only if, there are two different words $u,w\in\{0,1\}^{<n}$ of the same length, such that $b_w=b_u=0$. We put $X=\mathcal{T}_\mathcal{F}$. We defined $X$ by specifying its set of forbidden patterns, so $X$ is a tree-shift. A tree $t$ is in $X$ if and only if there are no two zeros in the same row of $t$.
    Clearly $X$ is a non-empty tree-shift on $\Sigma=\{0,1\}$ over $\mathcal{A}=\{0,1\}$.\\
    \indent Assume that $X$ is of finite type and its forbidden set $\mathcal{F}$ consists of blocks of height $n>0$. Then we can define $t\colon\{0,1\}^*\to\{0,1\}$ as follows. 
    We put $t_{0^{n+1}}=0$, $t_{1^{n+1}}=0$ and for words $w\in\Sigma^*\setminus\left\{0^{n+1},1^{n+1}\right\}$, we set $t_w=1$. Then, for any $w\in\Sigma^*$, at most one node labelled with 0 appears in $\restr{t}{w\Sigma^{<n}}$, so it is in $\mathcal{B}_n(X)$. This would mean that $t\in X$, but there are two words of the same length, such that there are zeros at their nodes, so $t$ can not be in $X$. This is a contradiction. Therefore $X$ is not of finite type.\\
    \indent We prove now that for every $i\in\Sigma$, the map $\restr{\sigma^i}{X}$ is open.
    Fix $i\in\Sigma$. We need to show that for $U=[b]\subseteq X$, where $b\in\mathcal{B}(X)$, the set $\sigma^i([b])$ is open in $X$. Let $b\in\mathcal{B}_n(X)$ for $n>0$ and $t\in\sigma^i([b])$. 
    Note that $t\in\sigma^i\left([b]\right)$ if and only if $t = \restr{\tilde{t}}{i\Sigma^*}$ for some $\tilde{t}\in [b]$.
    We will prove that $$\left[\restr{t}{\Sigma^{<n}}\right]\subseteq\sigma^i([b]).$$
    Let $s\in\left[\restr{t}{\Sigma^{<n}}\right]$. We will define $r\colon\{0,1\}^*\to\{0,1\}$ as follows. For $w\in\Sigma^*$, let $r_{iw}=s_w$. For $w\in\Sigma^{<n}$ that does not start with $i$, let $r_w=b_w$ and for all words $w\in\Sigma^*\setminus\Sigma^{<n}$ that do not start with $i$, let $r_w=1$. 
    The tree $r$ constructed in this way is an element of $[b]$.
    The first observation is that $\restr{r}{\Sigma^{<n}} = b$ as if $w\in\Sigma^{<n}$ does not start with $i$ then by definition $r_w = b_w$ and if $w = i\tilde{w}$ for some $\tilde{w}\in\Sigma^{<n-1}$ then 
    $$ r_w = s_{\tilde{w}} = t_{\tilde{w}} = \tilde{t}_{i\tilde{w}} = \tilde{t}_w = b_w.$$
    Also, $r\in X$ as it can be proved by considering a few cases. If $m\leq n$ then $r_u=b_u$ and $r_v = b_v$ so they can not be both 0, because $b\in\mathcal{B}(X)$.
    If $m>n$ and words $u,v$ both start with $i$ then they can be expressed as $u = i\tilde{u}, v = i\tilde{v}$ for some $\tilde{u},\tilde{v}\in\Sigma^{m-1}$.
    Then by the definition of $r$ we have $r_{u} = s_{\tilde{u}}$ and $r_{v} = s_{\tilde{v}}$.
    As $s\in X$, the labels of $s$ at $\tilde{u}$ and $\tilde{v}$ can not be both 0 and we are done with this case.
    If $m>n$ and one of words $u,v$ does not start with $i$ then this word is 1.
    Therefore $r$ is an element of $[b]$ and of course we have $s=\sigma^i(r)$ so $s\in\sigma^i([b])$.
    \end{example}

\section{Conclusion}
    We proved that for a tree-shift being of finite type is equivalent to the pseudo-orbit-tracing property and implies topological stability. To obtain the pseudo-orbit-tracing property from topological stability, we assumed that the considered tree-shift is perfect (has no isolated points).
    To the best of the author's knowledge, it is unknown if the equivalence between being of finite type and topological stability also holds without assuming that the shift space is perfect.
    Additionally, we proved that if $X$ is a tree-shift of finite type then the restricted shift maps $\left\{\restr{\sigma^i}{X}\right\}_{i\in\Sigma}$ are open, but the converse fails when we consider tree-shifts with alphabet $\Sigma$ consisting of more than one symbol. Therefore the relations between the notions can be summarized by the following diagram (here TS stands for "topologically stable, POTP for "pseudo-orbit-tracing property" and FT for "of finite type")
    \[
  \begin{tikzcd}
     X\text{ is open} \arrow[Leftarrow]{r}{} & X\text{ is FT} \arrow[Leftrightarrow]{r}{} & X\text{ has POTP} \arrow[Rightarrow]{r}{} & X\text{ is TS} \\
     & & & X\text{ is perfect and TS} \arrow[Rightarrow]{ul}{} \arrow[Rightarrow]{u}{}
  \end{tikzcd}
\]

\bibliographystyle{amsplain}
\bibliography{bibliography}

\end{document}